\documentclass[letterpaper, 10pt, twocolumn]{ieeeconf}      

\IEEEoverridecommandlockouts                              
\overrideIEEEmargins

\usepackage{amsmath,mathrsfs,amsfonts,amssymb,graphicx,epsfig}
\usepackage[dvipsnames]{xcolor}
 \usepackage{amsthm}
\usepackage{subcaption}
\usepackage{caption}
\usepackage{color,multirow,rotating}
\usepackage{algorithm,algpseudocode,algorithmicx}
\usepackage{cite,url,framed,bm,balance,dsfont,varwidth}
\usepackage{nicefrac}

\setlength{\tabcolsep}{1.1pt}

\newtheorem{theorem}{Theorem}

\newtheorem{remark}{Remark}

\usepackage{cleveref}
\usepackage{tikz}
\usetikzlibrary{calc,trees,positioning,arrows,chains,shapes.geometric,decorations.pathreplacing,decorations.pathmorphing,shapes, matrix,shapes.symbols}



\newcommand{\differential}{{\rm{d}}}

\newcommand{\RNum}[1]{\uppercase\expandafter{\romannumeral #1\relax}}


\allowdisplaybreaks

\title{\LARGE\textbf{
A Physics-informed Deep Learning Approach for \\ Minimum Effort Stochastic Control of Colloidal Self-Assembly}
}

\author{Iman Nodozi, Jared O'Leary, Ali Mesbah, Abhishek Halder
\thanks{Iman Nodozi is with the Department of Electrical and Computer Engineering, University of California, Santa Cruz, CA 95064, USA,
        {\tt\small{inodozi@ucsc.edu}}.\\
        Jared O'Leary and Ali Mesbah are with the Department of Chemical and Biomolecular Engineering, University of California, Berkeley, CA 94720, USA,
        {\tt\small{\{jared.oleary,mesbah\}@berkeley.edu}}.\\
        Abhishek Halder is with the Department of Applied Mathematics, University of California, Santa Cruz, CA 95064, USA,
        {\tt\small{ahalder@ucsc.edu}}.\\
        This work was partially supported by NSF grants 2112754 and 2112755.
}}

\begin{document}

\maketitle
\pagenumbering{arabic}

\bstctlcite{IEEEexample:BSTcontrol} 

\begin{abstract}
We propose formulating the finite-horizon stochastic optimal control problem for colloidal self-assembly in the space of probability density functions (PDFs) of the underlying state variables (namely, order parameters). The control objective is formulated in terms of steering the state PDFs from a prescribed initial probability measure towards a prescribed terminal probability measure with minimum control effort. For specificity, we use a univariate stochastic state model from the literature. Both the analysis and the computational steps for control synthesis as developed in this paper generalize for multivariate stochastic state dynamics given by generic nonlinear in state and non-affine in control models. We derive the conditions of optimality for the associated optimal control problem. This derivation yields a system of three coupled partial differential equations together with the boundary conditions at the initial and terminal times. The resulting system is a generalized instance of the so-called Schr\"{o}dinger bridge problem. We then determine the optimal control policy by training a physics-informed deep neural network, where the ``physics'' are the derived conditions of optimality. The performance of the proposed solution is demonstrated via numerical simulations on a benchmark colloidal self-assembly problem.
\end{abstract}

\section{Introduction}\label{sec:introduction}

 Colloidal self-assembly (SA) is the process by which discrete components (e.g., micro-/nano-particles in solution) spontaneously organize into an ordered state \cite{whitesides2002self}. The spontaneous self-organization central to colloidal SA enables ``bottom-up'' materials synthesis, which can allow for manufacturing advanced, highly-ordered crystalline structures in an inherently parallelizable and cost-effective manner \cite{paulson2015control, tang2022control}. The fact that colloidal SA can begin with micro- and/or nano-scale building blocks of varying complexity indicates that this bottom-up engineering approach can be used to synthesize novel metamaterials with unique optical, electrical, or mechanical properties \cite{juarez2012feedback, paulson2015control, tang2022control}.

Colloidal SA is an inherently stochastic (i.e., random) process prone to kinetic arrest due to particle Brownian motion \cite{paulson2015control, gillespie2007stochastic, gillespie2013perspective, tang2022control}. This leads to variability in materials manufacturing and possibly high defect rates, which can severely compromise the viability of using colloidal SA to reproducibly manufacture advanced materials. This lack of reproducibility in turn prevents colloidal SA from achieving cost-effective and scalable manufacturing of such materials \cite{paulson2015control, tang2022control, furst2013directed, liddle2016nanomanufacturing}. Thus, the thermodynamic and kinetic driving forces that govern colloidal SA, will need to be precisely and systematically modulated to consistently and efficiently direct colloidal SA systems towards high-value mass-producible structures and materials.

To more reproducibly drive collodial SA systems towards desired structures, it has been proposed to design a model-based feedback control policy wherein a global actuator (e.g., electric field voltage) is manipulated based on currently available information on the system state and a dynamical system model \cite{tang2013colloidal, tang2016optimal, xue2014optimal}. Work in \cite{tang2013colloidal} presents a model predictive control (MPC) method for controlling colloidal SA. These authors consider the dynamical model for the stochastic colloidal assembly process based on actuator-parameterized Langevin equations. In \cite{xue2014optimal, tang2016optimal}, the system is guided towards the desired highly-ordered structure based on a Markov decision process optimal control policy. 

In comparison, the perspective and approach taken in this paper towards optimal control of a colloidal SA process are significantly different, as we seek to control the time evolution of the joint probability distribution supported over the states of colloidal SA. Our technical approach and contributions are as follows.
\begin{itemize}
    \item[(1)] We show that the problem of controlling colloidal SA over a finite-time horizon can be naturally formulated in terms of steering of the probability distributions of the underlying stochastic system states, namely the order parameters. This leads to a two-point boundary value problem over the manifold of state probability measures, which from a control viewpoint is a non-traditional stochastic optimal control problem known as the Schr\"{o}dinger bridge problem (SBP), see e.g., \cite{chen2021stochastic}. The notion of lifting the stochastic control of colloidal SA directly onto the space of state probability measure as an SBP, is novel.
    
    \item[(2)] While there exists a growing literature on numerically solving the SBPs with nonlinear prior dynamics \cite{caluya2021wasserstein,caluya2021reflected,nodozi2022schr,caluya2019finite,caluya2020finite}, these works leverage specific structures of the underlying drift and diffusion terms in a stochastic differential equation (SDE). In contrast, the typical colloidal SA application, as considered here, requires more generic considerations since  the drift and diffusion coefficients can be nonlinear w.r.t to the states as well as non-affine in control. We show that, unlike the existing SBP conditions of optimality, our setting leads to a system of three coupled partial differential equations (PDEs) with endpoint PDF constraints. These three PDEs are the controlled Fokker-Planck-Kolmogorov (FPK) PDE, the Hamilton-Jacobi-Bellman (HJB) PDE, and a policy PDE.
    
    \item[(3)] The resulting system of three coupled PDEs is not amenable to existing computational approaches for the SBPs, such as the contractive fixed point recursions on the so-called Schr\"{o}dinger factors. To address this challenge, we employ the notion of physics-informed neural networks (PINN) (e.g., \cite{raissi2019physics,lu2021deepxde}) 
    to train a deep neural network approximating the solution of our coupled PDE system and the boundary conditions.
\end{itemize}

The distinct feature of the proposed control methodology is that it derives an optimal control policy that steers a given probability distribution of the order parameters of a colloidal SA system to a desired one over a finite-time horizon. The computation does not require making parametric approximation of the statistics such as the Gaussian mixture or exponential family. It also avoids approximating the nonlinearities in the drift and diffusion \emph{a priori}, e.g., via Taylor series.

Our technical contribution is a new control methodology. To make the exposition concrete, we use a specific univariate SDE model from the literature \cite{tang2013colloidal,xue2014optimal} to demonstrate the proof-of-concept for our proposed method. The specificity of the model is used from the outset as a didactic writing style, even though our proposed method is general, i.e., \emph{not} contingent on the nonlinearity structure or dimensionality of the SDE model.

\subsubsection*{Organization} This paper is structured as follows. Sec. \ref{Sec:ProblemFormulation} details our proposed stochastic optimal control problem formulation. In Sec. \ref{secOptimality}, we derive the first order conditions for optimality in the form of a system of coupled PDEs. We then learn the solutions for this system of equations by training a PINN, as detailed in Sec. \ref{secPINN}. The numerical simulation results are reported in Sec. \ref{sec:SimulationResults}, followed by concluding remarks in Sec. \ref{sec:conclusions}.


\subsubsection*{Notations} We use $\mathbb{E}_{\mu^{\pi}}\left[\cdot\right]$ to denote the expectation w.r.t. the controlled state probability measure or distribution $\mu^{\pi}$, that is, $\mathbb{E}_{\mu^{\pi}}\left[\cdot\right] := \int (\cdot)\:\differential\mu^{\pi}$. The superscript $\pi$ in $\mu^{\pi}$ indicates that the probability measure depends on the choice of the control policy $\pi$. When the probability measure $\mu^{\pi}$ is absolutely continuous, it admits a PDF $\rho^{\pi}$. The symbol $\sim$ is used as a shorthand for ``follows the probability distribution". 


\section{Problem Formulation}\label{Sec:ProblemFormulation}

\subsection{Colloidal Self-Assembly Sample Path Model}


The specific model we consider for the sample path dynamics for SA of colloidal particles is given by \cite{tang2013colloidal,xue2014optimal} the It\^{o} SDE 
\begin{equation}
\begin{aligned}
\differential \langle C_6\rangle =D_{1}(\langle C_6\rangle,\pi) \: \differential t + \sqrt{2 D_{2}(\langle C_6\rangle,\pi)} \: \differential w
\end{aligned}
\label{SDE1}
\end{equation}
where $t$ denotes time, and the state variable $\langle C_6\rangle\in[0,6]$ is an order parameter denoting the average number of hexagonally close packed particles around each particle. We consider the dynamics \eqref{SDE1} over a fixed time horizon $[0,T]$. The control input $u := \pi(\langle C_6 \rangle, t)\in\mathbb{R}$ denotes electric field voltage \cite{yeh1997assembly} that results from a Markovian control policy $\pi : [0,6] \times [0,T] \mapsto \mathbb{R}$, and $w$ denotes the standard Wiener process in $\mathbb{R}$. In \eqref{SDE1}, the functionals $D_1(\cdot,\cdot)$ and $D_2(\cdot,\cdot)$ are referred to as the \emph{drift} and the \emph{diffusion landscapes}, respectively. In the colloidal SA context, both $D_1,D_2$ are nonlinear in state and non-affine in control. 

Typically, the drift landscape $D_1$ is expressed in terms of the so-called \emph{free energy landscape} $F$ and the \emph{diffusion landscape} $D_2$, as
\begin{align}
    D_1(\langle C_6\rangle,\pi)= &\dfrac{\partial}{\partial \langle C_6\rangle}D_2(\langle C_6\rangle,\pi)\nonumber\\
    &-\frac{D_{2}(\langle C_6\rangle,\pi)}{k_{\rm{B}}  \: \theta }\frac{\partial}{\partial \langle C_6\rangle} F(\langle C_6\rangle,\pi), 
\label{DiffusionFreeEnergy}    
\end{align}
where the Boltzmann constant $k_{\rm{B}} = 1.38066 \times 10^{-23}$ Joules per Kelvin, and $\theta$ denotes a suitable temperature in Kelvin. In Sec. \ref{sec:SimulationResults}, we will give illustrative numerical results for specific choices of the diffusion and the free energy landscapes.

For an admissible Markovian policy $\pi(\cdot, t)$, we assume that the landscapes $D_1,D_2$ satisfy\\
\noindent\textbf{(A1) non-explosion and Lipschitz conditions:} there exist constants $c_1, c_2$ such that $$\|D_1(\cdot,\pi(\cdot, t))\|_{2} + \|\sqrt{D_2\left(\cdot,\pi(\cdot, t)\right)}\|_2 \leq c_1\left(1 + \|\cdot\|_2\right),$$ 
and that $$\|D_1(x,\pi(x, t)) - D_1(\widetilde{x},\pi(\widetilde{x}, t))\|_{2} \leq c_2\|x-\widetilde{x}\|_2$$ for all $x,\widetilde{x}\in[0,6]$, $t\in[0,T]$, \\
\noindent\textbf{(A2) uniformly lower bounded diffusion:} there exists constant $c_3$ such that $\left(\cdot\right)^{\top}D_2\left(\cdot,\pi(\cdot, t)\right)\left(\cdot\right) \geq c_3 \|\cdot\|_2^2$ for all $t\in[0,T]$.\\
\noindent The assumption \textbf{A1} guarantees \cite[p. 66]{oksendal2013stochastic} existence-uniqueness for the sample path of the SDE \eqref{SDE1}. The assumptions \textbf{A1, A2} together guarantee \cite[Ch. 1]{friedman2008partial} that the generator associated with \eqref{SDE1} yields absolutely continuous probability measures $\mu^{\pi}(\langle C_6\rangle,t)$ for all $t>0$ provided the initial measure $\mu_0:=\mu^{\pi}(\cdot,t=0)$ is absolutely continuous. In other words, the PDFs $\rho^{\pi}(\langle C_6\rangle,t)$ exist such that $\differential\mu^{\pi}(\langle C_6\rangle,t) = \rho^{\pi}(\langle C_6\rangle,t)\differential\langle C_6\rangle$. 



\subsection{Controlled Self-Assembly as Distribution Steering }\label{sec:DistributionSteeringProblem}
We propose reformulating the problem of designing a control policy $\pi(\langle C_6\rangle, t)$ for the controlled self-assembly subject to \eqref{SDE1}, to that of steering the \emph{statistics} of the stochastic state $\langle C_6 \rangle$ from a prescribed initial probability measure $\mu_0$ at $t=0$ to a prescribed terminal probability measure $\mu_T$ at $t=T$. This is motivated by the fact that $\langle C_6 \rangle \approx 0$ implies disordered crystalline structure while $\langle C_6 \rangle \approx 5.1$ implies an ordered structure.
So steering the stochastic state from disordered to ordered naturally transcribes to steering a high concentration of probability mass around $0$ to the same around $5.1$. Note the target value of  $\langle C_6 \rangle$ is $5.1$ and not $6$ due to edge effects in the lattice structure.


We emphasize here that we use the term ``statistics" in nonparametric sense, i.e., we allow arbitrary probability measures $\mu_0,\mu_{T}$ supported on the compact set $[0,6]$, and ask for provable steering of $\mu_0$ to $\mu_T$ via control, not just steering of first few statistical moments of $\mu_0$ to $\mu_T$ such as mean and variance. Even if $\mu_0,\mu_T$ have finite dimensional sufficient statistics, the transient $\langle C_6\rangle$ probability measures induced by the nonlinear SDE \eqref{SDE1} for a given control policy $\pi$, may not have so. This motivates formulating the control synthesis as a two point boundary value problem over the (infinite dimensional) manifold of state probability measures.

Specifically, we consider the minimum control effort steering of $\mu_0$ to $\mu_T$, i.e., solving
\begin{equation}
   \begin{aligned}[t] 
\underset{\pi\in\mathcal{U}}{\inf}\:~~~~~~~~~&\int_{0}^{T} \mathbb{E}_{\mu^{\pi}}\left[ \frac{1}{2}\pi^{2}\right]\:\differential t\\
 \text{subject to} ~~~~&\differential \langle C_6\rangle =D_{1}(\langle C_6\rangle,\pi) \differential t + \sqrt{ 2 D_{2}(\langle C_6\rangle,\pi)}\: \differential w,\\
 & \langle C_6\rangle(t=0)\sim \mu_{0}\;\text{(given)}, \; \\
 &\langle C_6\rangle(t=T)\sim \mu_{T}\;\text{(given)},
\label{SBP}    
\end{aligned} 
\end{equation}
where $\mu^{\pi}\equiv\mu^{\pi}(\langle C_6\rangle,t)$ denotes the controlled state probability measure at time $t$. In other words, we design state feedback for dynamically reshaping uncertainties subject to the dynamical constraint \eqref{SDE1}, endpoint statistical constraints, and the deadline constraint.

\begin{figure}[t]
\centering
\includegraphics[width=0.95\linewidth]{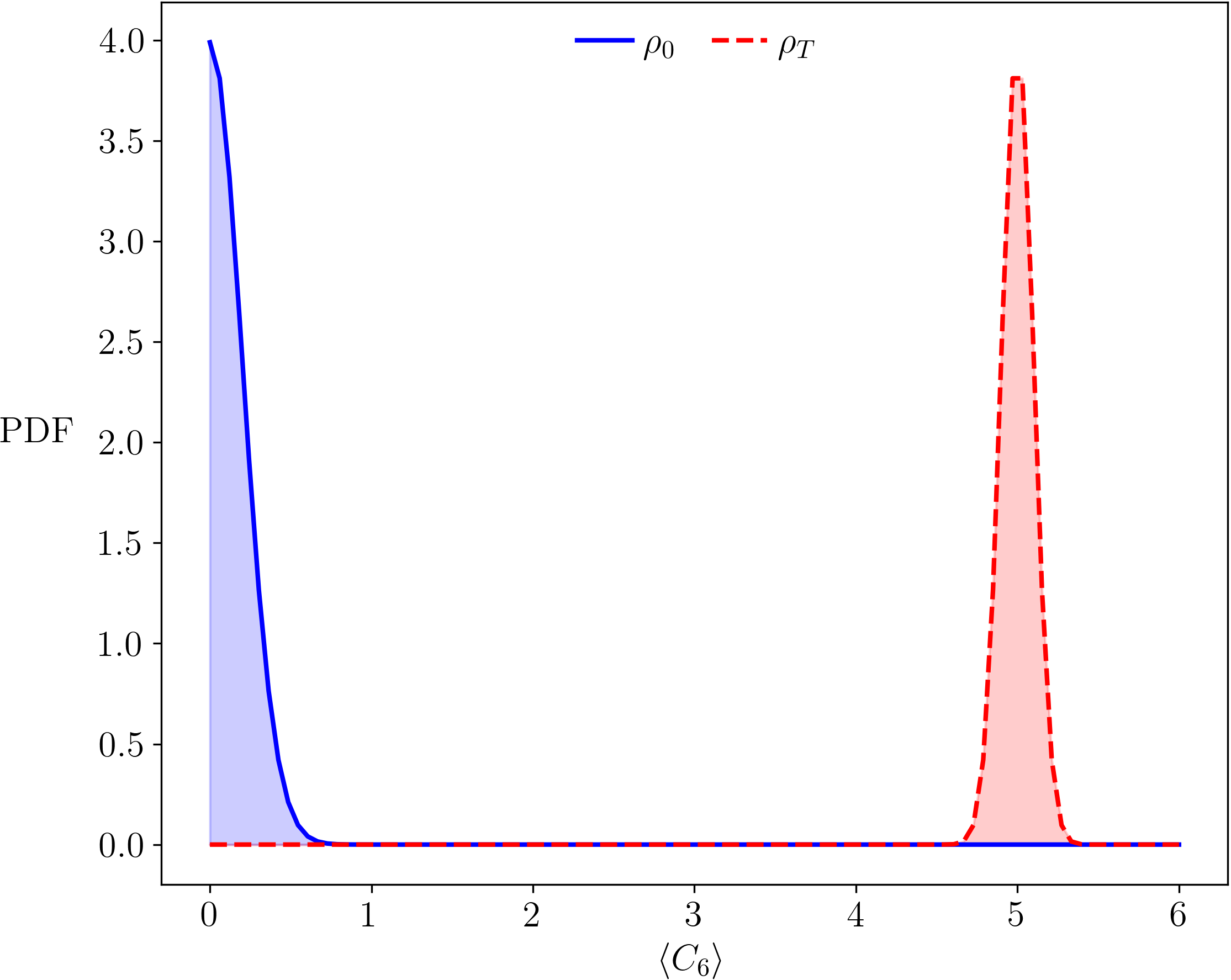}
\caption{{\small{The prescribed initial PDF $\rho_0$ (solid line) at the initial time $t=0$, and the prescribed terminal PDF $\rho_{T}$ (dashed line) at the final time $t=T$. Both PDFs are supported over $[0,6]$, which is the range of values for the state variable $\langle C_6\rangle$ denoting a crystallinity order parameter. In particular, $\langle C_6\rangle \approx 0$ implies a disordered state and $\langle C_6\rangle \approx$ 5-6 implies a highly ordered state.}}}
\vspace*{-0.15in}
\label{fig_Terminal_Distributions}
\end{figure}

In \eqref{SBP}, the set of feasible controls $\mathcal{U}$ comprises of the finite energy Markovian inputs, i.e.,
\begin{align}
\mathcal{U}:=\{\pi : [0,6] \times [0,T] \mapsto \mathbb{R}\mid \int_{0}^{T}\mathbb{E}_{\mu^{\pi}}[\pi^{2}]\: \differential t < \infty\}.
\label{FeasibleControl}    
\end{align}
Recall that the input is the electrical field voltage, and the minimum effort objective is a natural candidate to promote control parsimony. 

We suppose that the endpoint probability measures $\mu_0,\mu_T$ are absolutely continuous with respective PDFs $\rho_0,\rho_T$; see Fig. \ref{fig_Terminal_Distributions}.
Then, we can rewrite \eqref{SBP} as the variational problem:
\begin{subequations}
	\begin{align}
		&\underset{(\rho^{\pi},\pi)}{\inf}\:\int_{0}^{T}\int_{\mathbb{R}}\frac{1}{2}\pi^{2}(\langle C_6\rangle,t)\rho^{\pi}(\langle C_6\rangle,t)\:\differential \langle C_6\rangle\: \differential t \label{SBPobjective} \\
		 &\text{subject to}~~ 	\frac{\partial \rho^{\pi}}{\partial t} =-\frac{\partial}{\partial \langle C_6\rangle}( D_{1}\rho^{\pi})+\frac{\partial^{2}}{\partial \langle C_6\rangle^{2}} (D_{2} \rho^{\pi}),\label{FPKequation}\\
	 &\qquad\qquad\quad\rho^{\pi}(\langle C_6\rangle,0)=\rho_0, \quad \rho^{\pi}(\langle C_6\rangle,T)=\rho_T \label{InitialAndTerminalPDF},
	\end{align}
	\label{Optimization_FPK}
\end{subequations}
where \eqref{FPKequation} is the \emph{controlled} Fokker-Planck-Kolmogorov’s forward (FPK) PDE. The feasible pair $(\rho^{\pi},\pi)\in\mathcal{P}_{0T}\times \mathcal{U}$ where $\mathcal{P}_{0T}$ denotes PDF-valued trajectories connecting $\rho_0,\rho_T$, i.e.,
\begin{align}
\mathcal{P}_{0T} := \big\{&\rho(\cdot,t)\geq 0\mid \int\rho(\cdot,t)\differential(\cdot) = 1 \:\text{for all}\: t\in [0,T],\nonumber\\
&\rho(\cdot,t=0)=\rho_0,\quad \rho(\cdot,t=T)=\rho_T\big\},    
\label{FeasiblePDFvaluedCurves}    
\end{align}
and $\mathcal{U}$ is given by \eqref{FeasibleControl}.

The variational problem \eqref{Optimization_FPK} is an instance of the SBP that concerns with most likely stochastic evolution that transports $\rho_0$ to $\rho_T$ over $[0,T]$. The topic originated in the works of Erwin Schr\"{o}dinger \cite{schrodinger1931umkehrung,schrodinger1932theorie}, and recognizing its connection with stochastic control \cite{jamison1975markov,pra1990markov,dai1991stochastic,chen2021stochastic,chen2021controlling} has unfolded a rapid development in the control literature including when the drift is nonlinear in state \cite{caluya2021wasserstein,caluya2021reflected,nodozi2022schr,caluya2020finite,haddad2020prediction}. The computational approach in these works rely on a contractive fixed point recursion \cite{chen2016entropic} over the so-called Schr\"{o}dinger factors (see e.g., \cite[Sec. II]{caluya2021wasserstein}) resulting from certain change of variables related to the Fleming's logarithmic transform \cite{fleming1982logarithmic,kappen2005path} or the Hopf-Cole transform \cite{hopf1950partial,cole1951quasi}.

With respect to the existing literature on SBP with nonlinear state-dependent drift, the additional difficulty in \eqref{Optimization_FPK} is that $D_1,D_2$ for colloidal SA are \emph{non-affine} in the control policy. To the best of our knowledge, an SBP with this level of generality has not been investigated before. Beyond analytical difficulties, the computation also becomes challenging since non-affine control precludes the aforesaid fixed point recursion approach via the Schr\"{o}dinger factors.

\section{Optimality}\label{secOptimality}
In this work, we will not pursue the existence-uniqueness proofs for the solution of \eqref{Optimization_FPK}. Instead, we next formally derive the first order conditions for optimality for problem \eqref{Optimization_FPK} in the form of three coupled PDEs with endpoint boundary conditions while tacitly assuming the existence-uniqueness. In Sec. \ref{secPINN}, we will numerically solve this system via PINN. 

\begin{figure*}[t]
\centering
\includegraphics[width=.8\paperwidth]{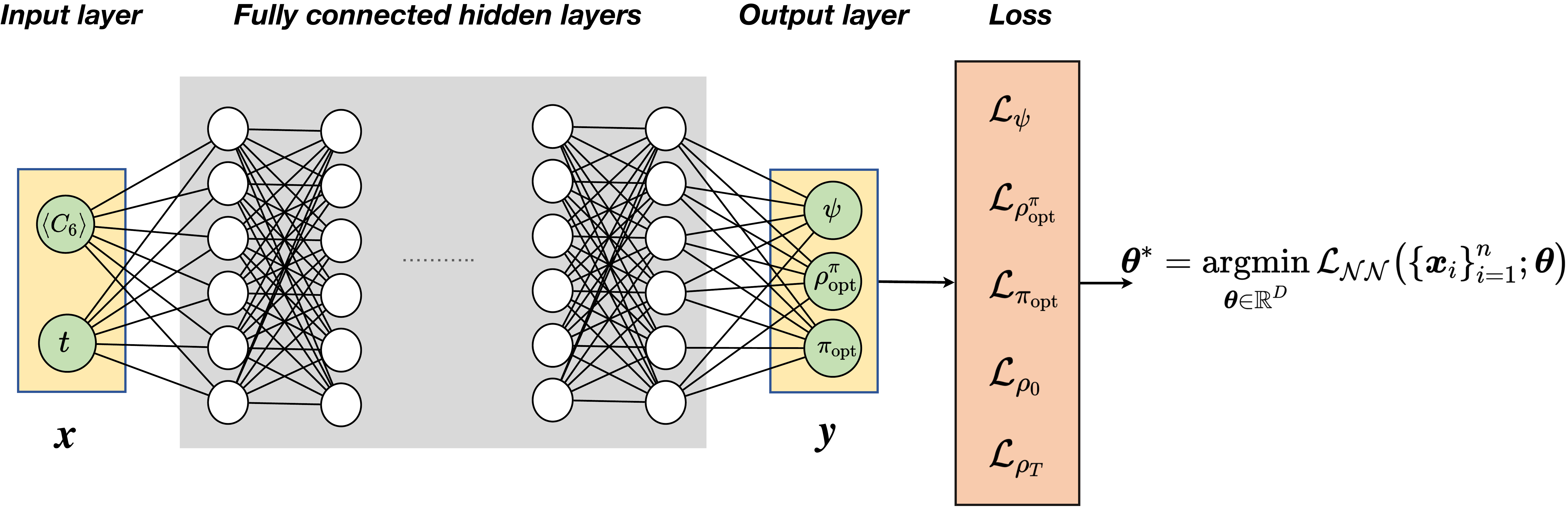}
\caption{{\small{The architecture of the physics-informed neural network with the system order parameter and time as the input features $\bm{x}:=(\langle C_6\rangle, t)$. The output $\bm{y}$ comprises of the value function, optimally controlled PDF, and optimal control policy, i.e., $\bm{y}:=(\psi,\rho^{\pi}_{\rm{opt}}, \pi_{\rm{opt}})$.}}}
\label{fig:PINNStructure}
\end{figure*}

\begin{theorem}\label{theorem1}
    \textbf{(First order conditions for optimality)} The pair $(\rho^{\pi}_{\rm{opt}}(\langle C_6\rangle,t),\pi_{\rm{opt}}(\langle C_6\rangle,t))$ that solves \eqref{Optimization_FPK}, must satisfy the system of coupled PDEs
\begin{subequations}
	\begin{align}
		&\frac{\partial \psi}{\partial t}=\frac{1}{2} (\pi_{\rm{opt}})^{2}-D_{1}\frac{\partial \psi}{\partial \langle C_6\rangle} - D_{2}\frac{\partial^{2}\psi}{\partial \langle C_6\rangle^{2}}, \label{HJB} \\
		 &	\frac{\partial\rho^{\pi}_{\rm{opt}}}{\partial t} =-\frac{\partial }{\partial \langle C_6\rangle}( D_{1}\rho^{\pi}_{\rm{opt}})+\frac{\partial^{2}}{\partial \langle C_6\rangle^{2}}(D_{2} \rho^{\pi}_{\rm{opt}}) ,\label{FPK}\\
		 & \pi_{\rm{opt}}(\langle C_6\rangle,t)=\frac{\partial \psi}{\partial \langle C_6\rangle} \frac{\partial D_{1}}{\partial \pi_{\rm{opt}}} +\frac{\partial^{2} \psi}{\partial \langle C_6\rangle^{2}} \frac{\partial D_{2}}{\partial \pi_{\rm{opt}}},
    \label{OptimalInput}
	\end{align}
	\label{coupled_PDEs}
\end{subequations}
in unknowns $(\psi(\langle C_6\rangle,t),\rho^{\pi}_{\rm{opt}}(\langle C_6\rangle,t),\pi_{\rm{opt}}(\langle C_6\rangle,t))$
with boundary conditions
\begin{equation}
    \rho^{\pi}_{\rm{opt}}(\langle C_6\rangle,0)=\rho_{0}, \; \quad \rho^{\pi}_{\rm{opt}}(\langle C_6\rangle,T)=\rho_{T}.
    \label{boundary_conditions}
\end{equation}
\end{theorem}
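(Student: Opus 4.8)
The plan is to treat \eqref{Optimization_FPK} as an equality-constrained variational problem and to read off its Euler--Lagrange (stationarity) conditions via the Lagrange-multiplier calculus, adjoining the controlled FPK PDE \eqref{FPKequation} with a co-state field $\psi = \psi(\langle C_6\rangle,t)$. Writing $x$ for the state $\langle C_6\rangle$ for brevity, I would form the Lagrangian as the objective \eqref{SBPobjective} plus the spacetime $L^2$ pairing of $\psi$ with the FPK residual $\partial_t\rho^{\pi} + \partial_x(D_1\rho^{\pi}) - \partial_{xx}(D_2\rho^{\pi})$, then integrate by parts in both $t$ and $x$ so that all derivatives land on $\psi$ rather than on $\rho^{\pi}$. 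The $t$-integration produces a boundary term $\int[\psi\rho^{\pi}]_{t=0}^{t=T}\,\differential x$ that drops out of the stationarity conditions because $\rho^{\pi}$ is pinned at $t=0,T$ by \eqref{InitialAndTerminalPDF}, so admissible variations $\delta\rho^{\pi}$ vanish there; the $x$-boundary contributions vanish under the natural no-flux / compact-support behavior on $[0,6]$. After this rearrangement, the Lagrangian integrand depends on $\rho^{\pi}$ only algebraically and on $\pi$ only pointwise, through $D_1(x,\pi)$ and $D_2(x,\pi)$.

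Next I would impose stationarity in the three unknowns. (i) $\delta\mathcal{L}/\delta\rho^{\pi}=0$: since $D_1,D_2$ carry no $\rho^{\pi}$-dependence, this gives $\tfrac12\pi^2 - \partial_t\psi - D_1\,\partial_x\psi - D_2\,\partial_{xx}\psi = 0$, i.e. the HJB-type PDE \eqref{HJB} at $\pi=\pi_{\rm opt}$. (ii) $\delta\mathcal{L}/\delta\pi=0$: differentiating the integrand pointwise in $\pi$ and using the $\pi$-dependence of $D_1,D_2$ yields $\pi\rho^{\pi} - \rho^{\pi}\,(\partial D_1/\partial\pi)\,\partial_x\psi - \rho^{\pi}\,(\partial D_2/\partial\pi)\,\partial_{xx}\psi = 0$; dividing by $\rho^{\pi}>0$ on its support gives the policy PDE \eqref{OptimalInput}. (iii) $\delta\mathcal{L}/\delta\psi=0$: this simply returns the constraint, namely the controlled FPK PDE \eqref{FPK}, while the prescribed endpoints \eqref{boundary_conditions} are inherited from the feasible set $\mathcal{P}_{0T}$. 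Collecting (i)--(iii) yields the coupled system \eqref{coupled_PDEs} with boundary data \eqref{boundary_conditions}.

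The step I expect to be the crux --- and the one that distinguishes this setting from the classical SBP derivation --- is (ii). When $D_1,D_2$ are affine in $\pi$, the inner minimization over $\pi$ has an explicit minimizer that can be substituted back, collapsing the system via a Hopf--Cole / Fleming logarithmic transform; here non-affineness forces us to retain the first-order stationarity in $\pi$ as a genuine third PDE rather than eliminate it. Checking that this stationary point is actually a minimizer of $\pi\mapsto\tfrac12\pi^2\rho^{\pi} - D_1\rho^{\pi}\partial_x\psi - D_2\rho^{\pi}\partial_{xx}\psi$ would require the second-order condition $1 - (\partial^2 D_1/\partial\pi^2)\,\partial_x\psi - (\partial^2 D_2/\partial\pi^2)\,\partial_{xx}\psi \ge 0$, which I will not pursue since the theorem asserts only necessity. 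The remaining care is routine and, as the paper states, carried out formally under \textbf{A1}--\textbf{A2} and the tacit existence/uniqueness: justifying the integration-by-parts boundary terms, the legitimacy of interchanging variation and integration, and dividing by $\rho^{\pi}$ on its support.
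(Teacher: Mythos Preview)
Your proposal is correct and follows essentially the same Lagrangian route as the paper: adjoin the FPK constraint with a multiplier $\psi$, integrate by parts so that derivatives fall on $\psi$, then read off first-order conditions. The only cosmetic difference is ordering: the paper first fixes $\rho^{\pi}$ and minimizes pointwise over $\pi$ to obtain \eqref{OptimalInput}, then substitutes this back and invokes arbitrariness of $\rho^{\pi}$ to extract \eqref{HJB}, whereas you take the three variations $\delta\mathcal{L}/\delta\rho^{\pi}$, $\delta\mathcal{L}/\delta\pi$, $\delta\mathcal{L}/\delta\psi$ in parallel; the resulting equations are identical.
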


\begin{proof}
Consider the Lagrangian associated with \eqref{Optimization_FPK}:
\begin{equation}
\begin{aligned}
        \mathcal{L}(\rho^{\pi},\pi,\psi):=&\int_{0}^{T}\!\!\int_{\mathbb{R}} \left\{ \frac{1}{2}\pi^{2}\rho^{\pi} +\psi\times  \left(  \frac{\partial \rho^{\pi}}{\partial t} \right. \right.\\
        &  \left. \left. +\frac{\partial}{\partial \langle C_6\rangle}( D_{1}\rho^{\pi})-\frac{\partial^{2}}{\partial \langle C_6\rangle^{2}} (D_{2} \rho^{\pi})\right)\right\} \differential x\:\differential t
\end{aligned}
\label{lagrangian}
\end{equation}
where $\psi(\langle C_6\rangle,t)$ is a $C^{1}(\mathbb{R};\mathbb{R}_{\geq0})$ Lagrange multiplier. 

Performing integration by parts, the Lagrangian \eqref{lagrangian} can be written as
\begin{equation}
\begin{aligned}
        \mathcal{L}(\rho^{\pi},\pi,\psi)=\int_{0}^{T}\int_{\mathbb{R}} &\left( \frac{1}{2}\pi^{2} -\frac{\partial \psi}{\partial t} -D_{1} \frac{\partial \psi}{\partial \langle C_6\rangle}\right.\\
        &~~~\left. -D_{2}\frac{\partial^2 }{\partial \langle C_6\rangle^2}\psi \right) \rho^{\pi} \:\differential x\:\differential t.
\end{aligned}
\label{Integrationbyparts}
\end{equation}
For $\rho^{\pi}$ fixed, pointwise minimization of \eqref{Integrationbyparts} with respect to $\pi$  yields \eqref{OptimalInput}.

By substituting  \eqref{OptimalInput} in \eqref{Integrationbyparts} and equating the resulting expression to zero, we get the dynamic programming equation
\begin{equation}
\begin{aligned}
        &\int_{0}^{T}\!\!\int_{\mathbb{R}} \left( \frac{1}{2}\left(\frac{\partial \psi}{\partial \langle C_6\rangle} \frac{\partial D_{1}}{\partial \pi}+\frac{\partial^{2} \psi}{\partial \langle C_6\rangle^{2}} \frac{\partial D_{2}}{\partial \pi}\right)^{2} -\frac{\partial \psi}{\partial t}\right.\\
        & - D_{1} \frac{\partial \psi}{\partial \langle C_6\rangle} -\left.D_{2}\frac{\partial^2 \psi}{\partial \langle C_6\rangle^2} \right) \rho^{\pi}(\langle C_6\rangle,t) \differential \langle C_6\rangle\:\differential t=0.
\end{aligned}
\label{dynamic_programming_equation}
\end{equation}
For \eqref{dynamic_programming_equation} to be satisfied for arbitrary $\rho^{\pi}$, we must have
\begin{align}
\frac{\partial \psi}{\partial t}=&\frac{1}{2} \left(\frac{\partial \psi}{\partial \langle C_6\rangle} \frac{\partial D_{1}}{\partial \pi}+\frac{\partial^{2} \psi}{\partial \langle C_6\rangle^{2}} \frac{\partial D_{2}}{\partial \pi}\right)^{2} \nonumber\\
&-D_{1} \frac{\partial \psi}{\partial \langle C_6\rangle}- D_{2}\frac{\partial^{2} \psi}{\partial \langle C_6\rangle^{2}},
\end{align}
which upon using \eqref{OptimalInput}, gives the  HJB PDE \eqref{HJB}. The associated FPK PDE \eqref{FPK} and the boundary conditions \eqref{boundary_conditions} follow from \eqref{FPKequation} and \eqref{InitialAndTerminalPDF}, respectively.
\end{proof}
\begin{remark}\label{remark_ValueFnHJB}
The equation \eqref{HJB} in Theorem \ref{theorem1} is the HJB PDE, and the variable $\psi(\langle C_6\rangle,t)$ is referred to as the value function.
\end{remark}
\begin{remark}\label{remark_controlnonaffine}
Unlike the conditions for optimality for control-affine SBP \cite[eq. (5.7)-(5.8)]{chen2021stochastic}, \cite[eq. (20)-(21)]{caluya2021wasserstein}, \cite[eq. (4)]{caluya2021reflected} where we get two coupled PDEs, one being HJB and another being FPK, the system of equations \eqref{coupled_PDEs} has three coupled PDEs because the policy equation \eqref{OptimalInput} is implicit in $\pi_{\rm{opt}}$. Due to the non-affine control, we can no longer express $\pi_{\rm{opt}}$ as the scaled gradient of the value function $\psi$. Instead, we now need to solve the coupled system \eqref{coupled_PDEs}-\eqref{boundary_conditions}.
\end{remark}


\section{Solving the Conditions for Optimality \\using PINN}\label{secPINN}
We propose leveraging recent advances in neural network based computational frameworks to jointly learn the solutions of \eqref{coupled_PDEs}-\eqref{boundary_conditions}. In the following, we discuss training of a PINN \cite{raissi2019physics,lu2021deepxde} to numerically solve \eqref{coupled_PDEs}-\eqref{boundary_conditions}, which is a system of three coupled PDEs together with the endpoint PDF boundary conditions.

The proposed architecture of the PINN is shown in  Fig. \ref{fig:PINNStructure}. In our problem, $\bm{x}:=(\langle C_6\rangle, t)$ comprises of the features given to the DNN, and the DNN output $\bm{y}:=(\psi,\rho^{\pi}_{\rm{opt}}, \pi_{\rm{opt}})$ comprises of the value function, optimally controlled PDF, and optimal policy. We parameterize the output of the fully connected feed-forward DNN via $\bm{\theta} \in \mathbb{R}^{D}$, i.e.,
\begin{align}
\bm{y}(\bm{x}) \approx \mathcal{NN}(\bm{x} ; \bm{\theta}),
\end{align}
where $\mathcal{NN}(\cdot;\bm{\theta})$ denotes the neural network approximant parameterized by $\bm{\theta}$, and $D$ is the dimension of the parameter space (i.e., the total number of to-be-trained weight, bias and scaling parameters for the DNN).

The overall loss function for the network, denoted as $\mathcal{L}_{\mathcal{NN}}$, consists of the sum of the losses associated with the three equations in \eqref{coupled_PDEs} and the losses associated with the boundary conditions \eqref{boundary_conditions}. Specifically, let $\mathcal{L}_{\psi}$ be the loss term for the HJB PDE \eqref{HJB}, let $\mathcal{L}_{\rho^{\pi}_{\rm{opt}}}$ be the loss term for the FPK PDE \eqref{FPK}, and let $\mathcal{L}_{\pi_{\rm{opt}}}$ be the loss term for the control policy equation \eqref{OptimalInput}. Likewise, let  $\mathcal{L}_{\rho_{0}}$ and  $\mathcal{L}_{\rho_{T}}$ denote the loss terms for the corresponding endpoint constraints \eqref{boundary_conditions}. Then
\begin{align}
\mathcal{L}_{\mathcal{NN}}:=\mathcal{L}_{\psi}+\mathcal{L}_{\rho^{\pi}_{\rm{opt}}}+\mathcal{L}_{\pi_{\rm{opt}}}+\mathcal{L}_{\rho_{0}}+\mathcal{L}_{\rho_{T}},
\label{totalloss}
\end{align} 
where each summand loss term in \eqref{totalloss} is evaluated on a set of $n$ collocation points $\{\bm{x}_i\}_{i=1}^{n}$ in the domain of the feature space $\Omega:=[0,6]\times[0,T]$, i.e., $\{\bm{x}_i\}_{i=1}^{n}\subset\Omega$, and
\begin{align*}
\mathcal{L}_{\psi} &:=\frac{1}{n} \sum_{i=1}^{n}\left(\left.\frac{\partial \psi}{\partial t}\right|_{\bm{x}_{i}}-\left.\frac{1}{2} (\pi_{\rm{opt}})^{2}\right|_{\bm{x}_{i}}\left.+D_{1}\frac{\partial }{\partial \langle C_6\rangle}\psi \right|_{\bm{x}_{i}}\right.\\
&\qquad \qquad\qquad\qquad\qquad\qquad\left.\left.+D_{2}\frac{\partial^{2} }{\partial \langle C_6\rangle^{2}}\psi \right|_{\bm{x}_{i}}\right)^{2}, \\
\mathcal{L}_{\rho^{\pi}_{\rm{opt}}} &:=\frac{1}{n} \sum_{i=1}^{n}\left(\left.	\frac{\partial \rho^{\pi}_{\rm{opt}}}{\partial t}\right|_{\bm{x}_{i}}+\left.\frac{\partial }{\partial \langle C_6\rangle}( D_{1}\rho^{\pi}_{\rm{opt}})\right|_{\bm{x}_{i}}\right.\\
&\qquad \qquad\qquad\qquad\qquad\left.\left.-\frac{\partial^{2}}{\partial \langle C_6\rangle^{2}}(D_{2} \rho^{\pi}_{\rm{opt}})\right|_{\bm{x}_{i}}\right)^{2},\\
\mathcal{L}_{\pi_{\rm{opt}}} &:=\frac{1}{n} \sum_{i=1}^{n}\left(\left.	\pi_{\rm{opt}}\right|_{\bm{x}_{i}}-\left.\frac{\partial }{\partial \langle C_6\rangle}\psi \frac{\partial }{\partial \pi_{\rm{opt}}}D_{1}\right|_{\bm{x}_{i}}\right.\\
&\qquad \qquad\qquad\qquad\qquad\left.\left.-\frac{\partial^{2} }{\partial \langle C_6\rangle^{2}}\psi \frac{\partial }{\partial \pi_{\rm{opt}}}D_{2}\right|_{\bm{x}_{i}}\right)^{2},\\
\mathcal{L}_{\rho_{0}}&:=\frac{1}{n} \sum_{i=1}^{n}\left(\left.\rho^{\pi}_{\rm{opt}}(\cdot,t=0)\right|_{\bm{x}_i}-\left.\rho_{0}(\cdot)\right|_{\bm{x}_i}\right)^{2},\\
\mathcal{L}_{\rho_{T}}&:=\frac{1}{n} \sum_{i=1}^{n}\left(\left.\rho^{\pi}_{\rm{opt}}(\cdot,t=T)\right|_{\bm{x}_i}-\left.\rho_{T}(\cdot)\right|_{\bm{x}_i}\right)^{2},
\end{align*}
for each collocation point $\bm{x}_i$, $i=1,\hdots,n$.

For training the PINN, we minimize the overall loss \eqref{totalloss} over $\bm{\theta}\in\mathbb{R}^{D}$ by solving
\begin{align}
\bm{\theta}^{*}=\underset{\bm{\theta} \in \mathbb{R}^{D}}{\operatorname{argmin}}\: \mathcal{L}_{\mathcal{NN}}(\{\bm{x}_i\}_{i=1}^{n}; \bm{\theta}).    
\label{NNtraining}    
\end{align}
In the next Section, we detail the simulation set up and report the numerical results.
\begin{center}
\begin{figure}[t]
\centering
\includegraphics[width=\linewidth]{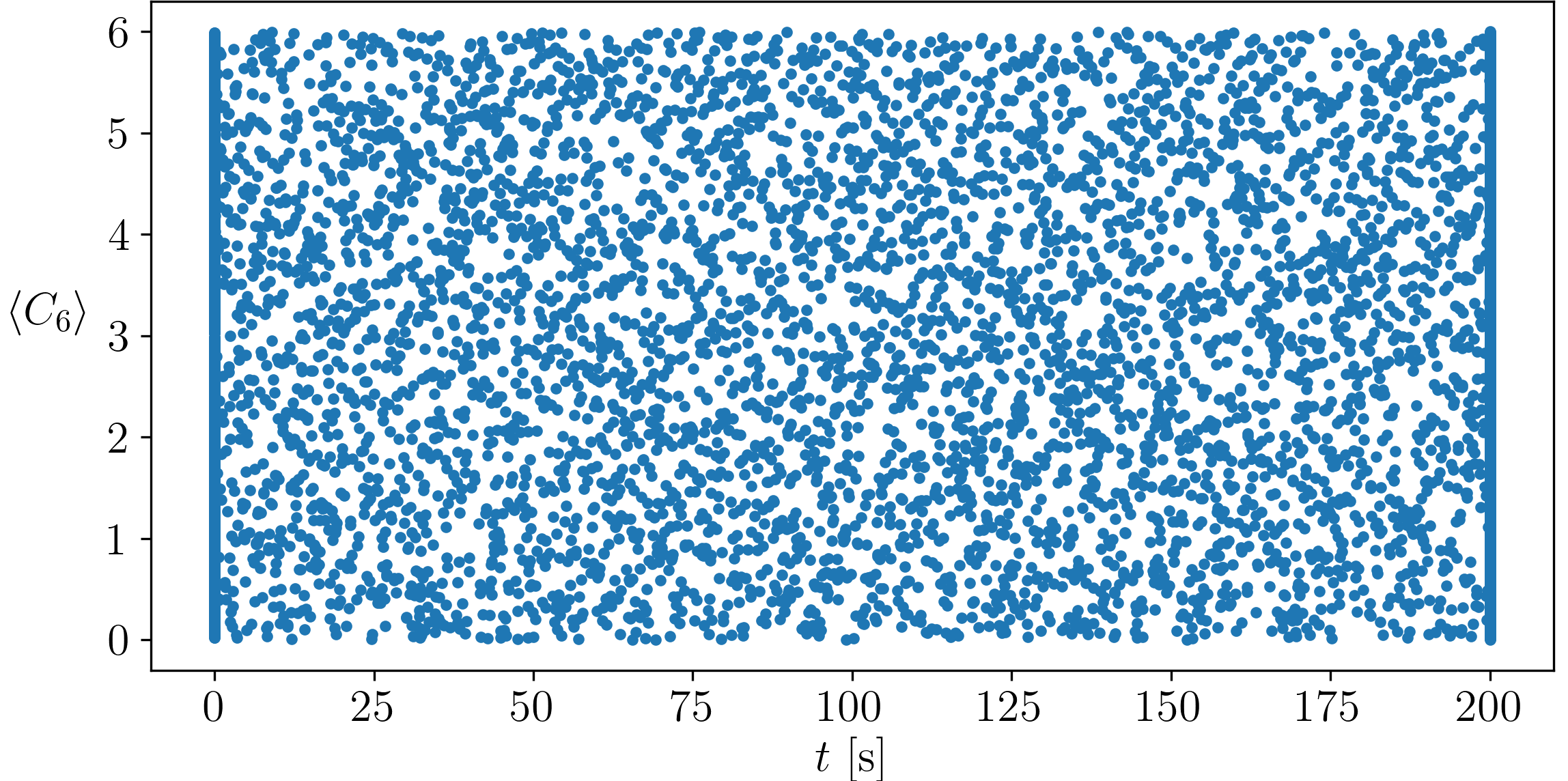}
\caption{{\small{Training data in the domain $\Omega =[0,6]\times[0,200]$.}}}
\label{fig:points}
\end{figure}    
\end{center}

\begin{figure*}[t]
\centering
\includegraphics[width=.65\paperwidth]{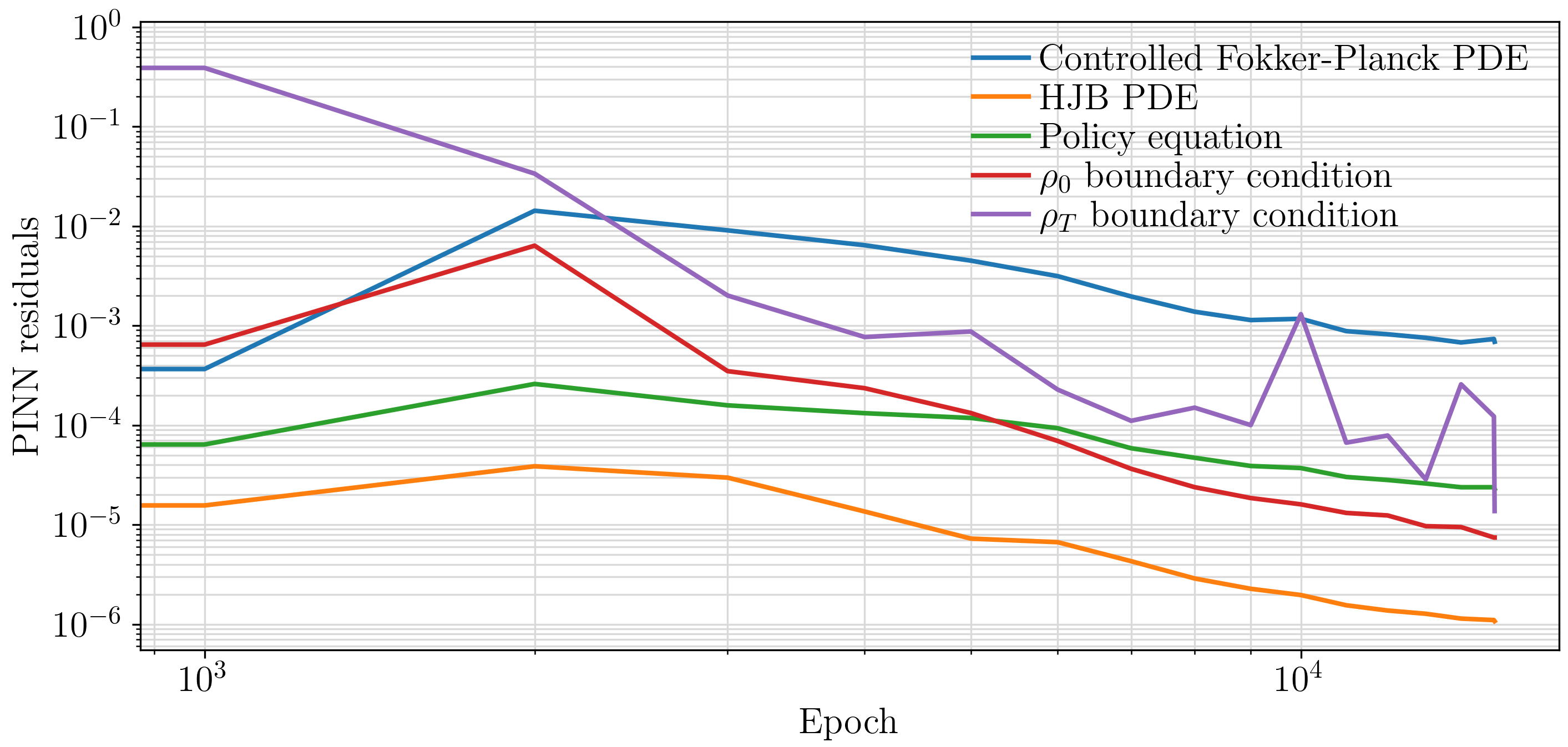}
\caption{{\small{The PINN residuals for solving the conditions for optimality \eqref{coupled_PDEs}-\eqref{boundary_conditions}.}}}
\label{fig:PINNResiduals}
\end{figure*}

\section{Simulation Results}\label{sec:SimulationResults}
We consider the colloidal SA from \cite{tang2013colloidal,xue2014optimal}, where the free energy and the diffusion landscapes are 
\begin{equation}
F(\langle C_6\rangle,\pi)=a\:k_{\rm{B}}\: \theta\left(\langle C_6\rangle-b-c\pi\right)^2, \label{F}
\end{equation}
\begin{equation}
D_{2}(\langle C_6\rangle,\pi)=d \: \exp\left(-(\langle C_6\rangle-b-c\pi)^2\right)+f, 
\label{D_2}
\end{equation}
with known parameters $a=10, b=2.1, c=0.75, d=4.5 \times 10^{-3},f=0.5 \times 10^{-3}$, $k_{\rm{B}} = 1.38066 \times 10^{-23}$ Joules per Kelvin, and $\theta = 293$ Kelvin.

We use the DeepXDE library \cite{lu2021deepxde} for training the PINN. In particular, we choose a neural network with 3 hidden layers with 70 neurons in each layer. The activation functions are chosen to be $\tanh(\cdot)$. The input-output structure of the network are as explained in Sec. \ref{secPINN}. For solving \eqref{NNtraining}, we use the Adam optimizer \cite{kingma2014adam}.

We fix the final time $T=200$ s, and consider the endpoint PDFs $\rho_0,\rho_T$ shown in Fig. \ref{fig_Terminal_Distributions}, represented as truncated normal PDFs (see e.g., \cite[Sec. 2.2]{robert1995simulation}): 
$$\rho_i(x) := \!\!\begin{cases}
\dfrac {1}{\sigma_i}\,{\dfrac {\phi \left({\frac {x-\mu_i }{\sigma_i }}\right)}{\Phi \left({\dfrac {b-\mu_i }{\sigma_i }}\right)-\Phi \left({\dfrac {a-\mu_i }{\sigma_i }}\right)}}\!\!\!\! & \text{for}\quad\!\!\!\! a\leq x \leq b,\\
0 & \text{otherwise},
\end{cases}
$$
where $i\in\{0,T\}$, $\mu_0 = 0, \mu_T = 5, \sigma_0 = 0.2, \sigma_T = 0.1, a=0, b=6$. The functions $\phi(\cdot)$ and $\Phi(\cdot)$ denote the standard normal PDF, and the standard normal cumulative distribution function, respectively. Recall from Sec. \ref{sec:DistributionSteeringProblem} that our proposed method is applicable for arbitrary compactly supported endpoint PDFs.

As shown in Fig. \ref{fig:points}, we choose 1000 training points at each of the initial ($t=0$ s) and terminal ($t=200$ s) times, and another 5000 state-time points inside the domain $\Omega:=[0,6]\times[0,200]$. We used the residual-based adaptive refinement method in \cite{lu2021deepxde} for generating these training points. As shown in Fig. \ref{fig:PINNResiduals}, after 15,000 training epochs, the residuals for all loss functions go below $10^{-3}$. The training was performed in Python 3 on a MacBook Air with 1.1 GHz Quad-Core Intel Core i5 processor and 8 GB memory. The computational time for training was 2097.40 seconds.

\begin{figure}[t]
\centering
\includegraphics[width=1\linewidth]{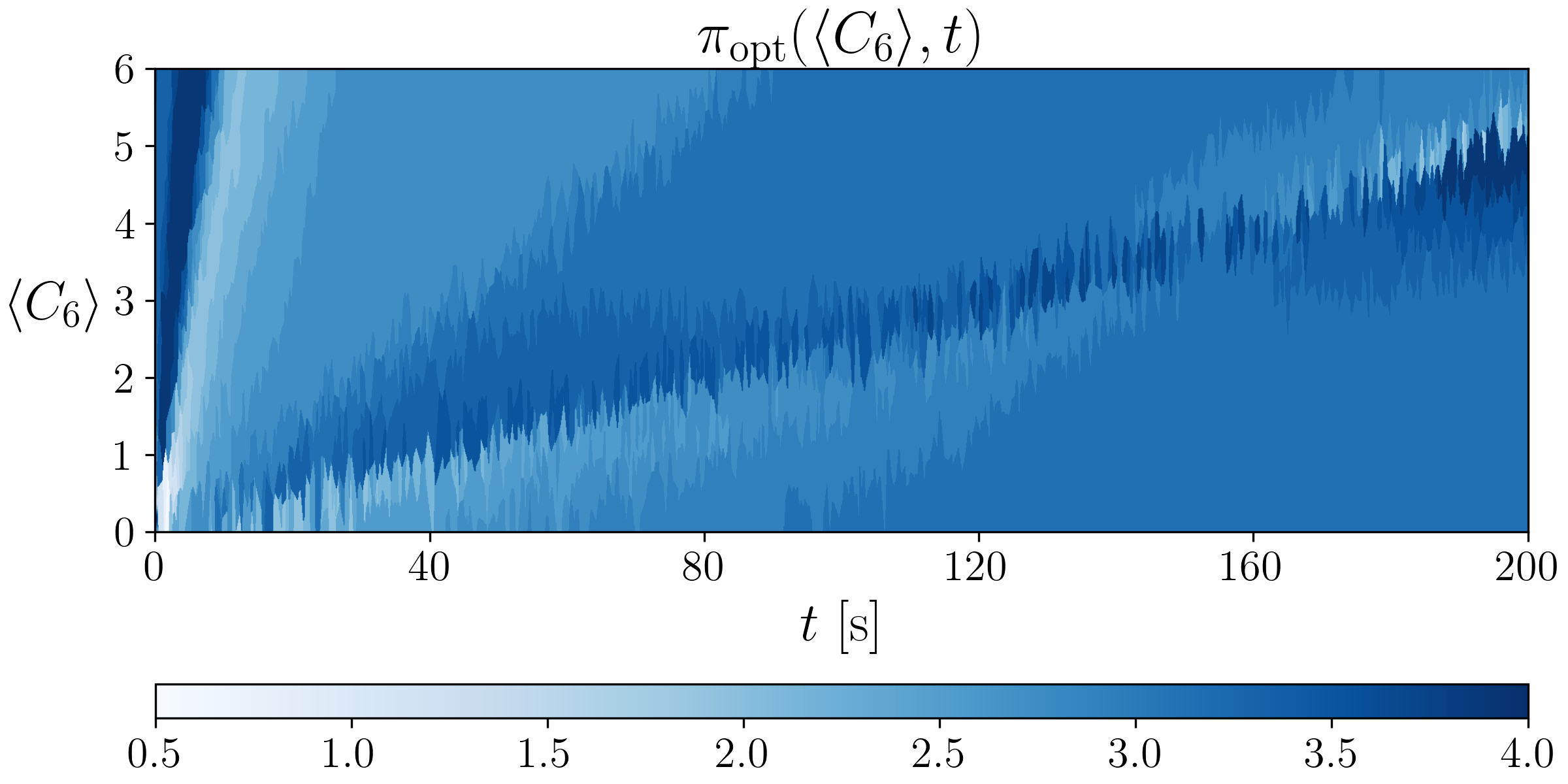}
\caption{{\small{The optimal policy $\pi_{\rm{opt}}\left(\langle C_6\rangle,t\right)$ obtained as an output of the trained PINN solving the conditions for optimality \eqref{coupled_PDEs}-\eqref{boundary_conditions}.}}}
\label{fig:OptimalPolicy}
\end{figure}

Fig. \ref{fig:OptimalPolicy} shows the optimal policy $\pi_{\rm{opt}}\left(\langle C_6\rangle,t\right)$ obtained as the output of the trained PINN. The value function $\psi\left(\langle C_6\rangle,t\right)$ obtained as another output of the trained PINN is shown in Fig. \ref{fig:ValueFunction}. In Fig. \ref{fig:OptimallyControlledJointPDFs}, the snapshots of the optimally controlled PDFs $\rho_{\rm{opt}}^{\pi}$ obtained from the trained PINN are shown as solid curves with grey filled areas. To verify these results, we sampled 1000 initial states from the given $\rho_0$ using the Metropolis-Hastings \cite{hastings1970monte} Markov Chain Monte Carlo algorithm, and then performed closed-loop simulations using the optimal policy $\pi_{\rm{opt}}\left(\langle C_6\rangle,t\right)$ (shown in Fig. \ref{fig:OptimalPolicy}) obtained from the PINN. The stem plots shown in Fig. \ref{fig:OptimallyControlledJointPDFs} depict the kernel density estimates (KDE) of these closed-loop trajectories. The KDE stems provide empirical approximations for the closed-loop optimally controlled PDFs, which match very well with the learnt solutions from PINN.

\begin{figure}[t]
\centering
\includegraphics[width=1\linewidth]{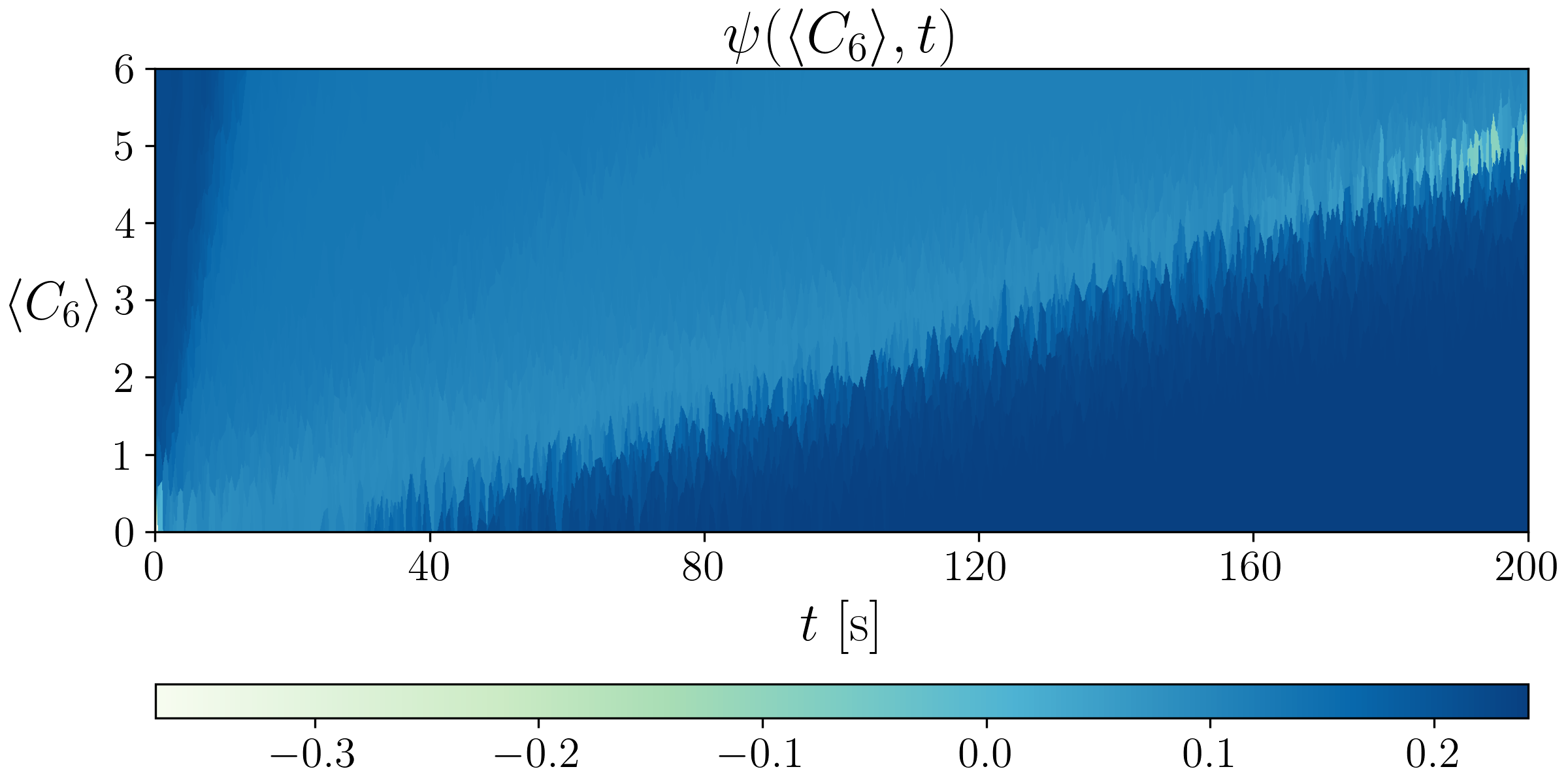}
\caption{{\small{The value function $\psi\left(\langle C_6\rangle,t\right)$ obtained as an output of the trained PINN solving the conditions for optimality \eqref{coupled_PDEs}-\eqref{boundary_conditions}.}}}
\label{fig:ValueFunction}
\end{figure}

\begin{figure*}[t]
\centering
\includegraphics[width=.65\linewidth]{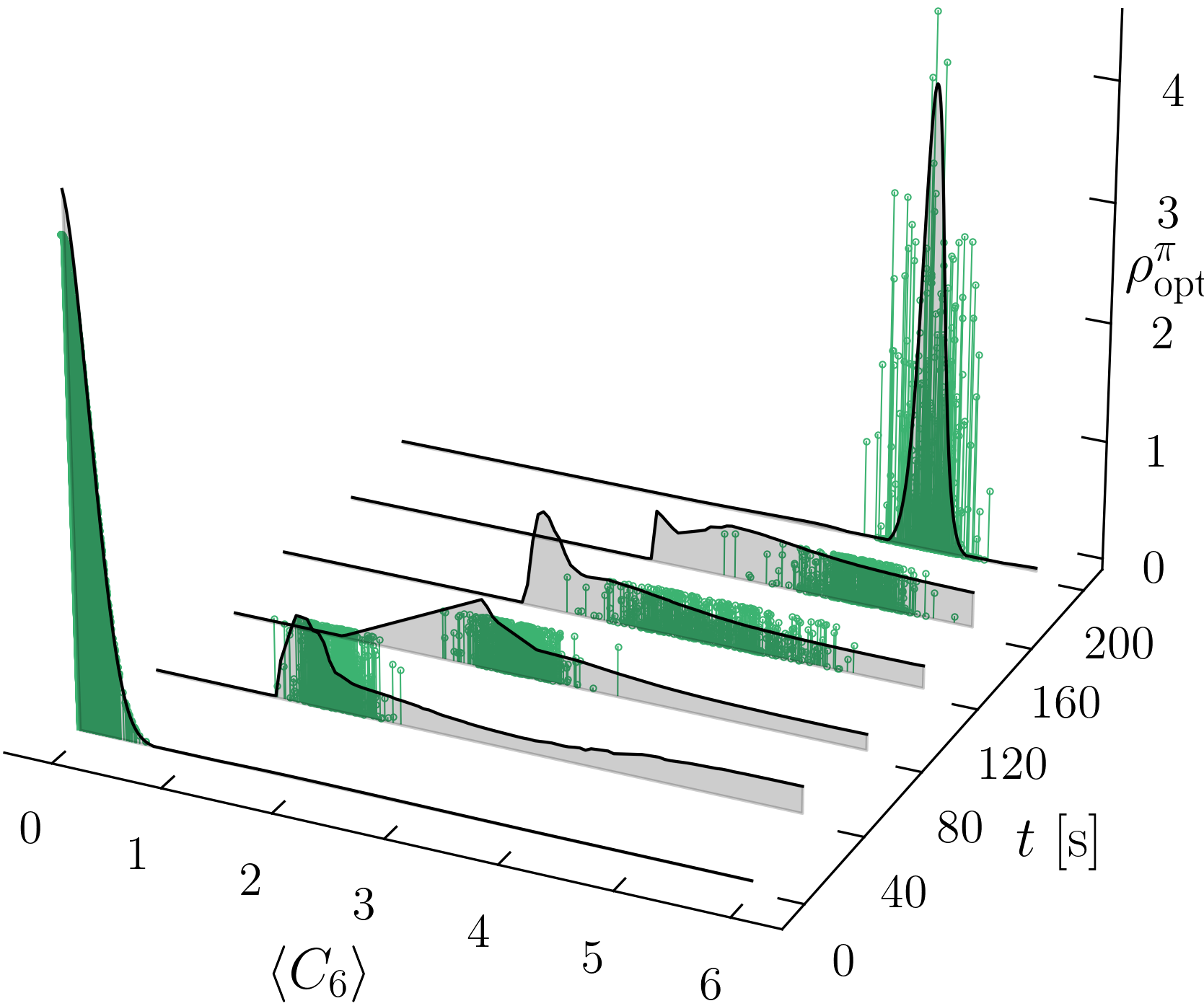}
\caption{{\small{Snapshots of the optimally controlled joint PDFs $\rho^{\pi}_{\rm{opt}}$ steering the state $\langle C_6\rangle$ distribution from the given $\rho_{0}$ to $\rho_{T}$, as in Fig. \ref{fig_Terminal_Distributions}, over the given time horizon $[0,T]\equiv [0,200]$ s subject to the controlled noisy nonlinear sample path dynamics \eqref{SDE1}. The solid black curves with grey filled areas are obtained from the PINN. The stem plots are the KDE approximants of the optimally controlled PDF snapshots obtained from the closed-loop sample paths, as explained in Sec. \ref{sec:SimulationResults}.}}}
\label{fig:OptimallyControlledJointPDFs}
\end{figure*}

Fig. \ref{fig:SamplePath} shows the 1000 random sample paths for the closed loop simulation using the learnt optimal policy $\pi_{\rm{opt}}\left(\langle C_6\rangle,t\right)$ that provably steers the given $\rho_0$ from $t=0$ to the given $\rho_T$ at $t=T$ over $[0,T]\equiv [0,200]$ s.

\begin{figure*}[t!]
    \centering
    \begin{subfigure}[t]{0.49\textwidth}
        \centering
        \includegraphics[height=2.7in]{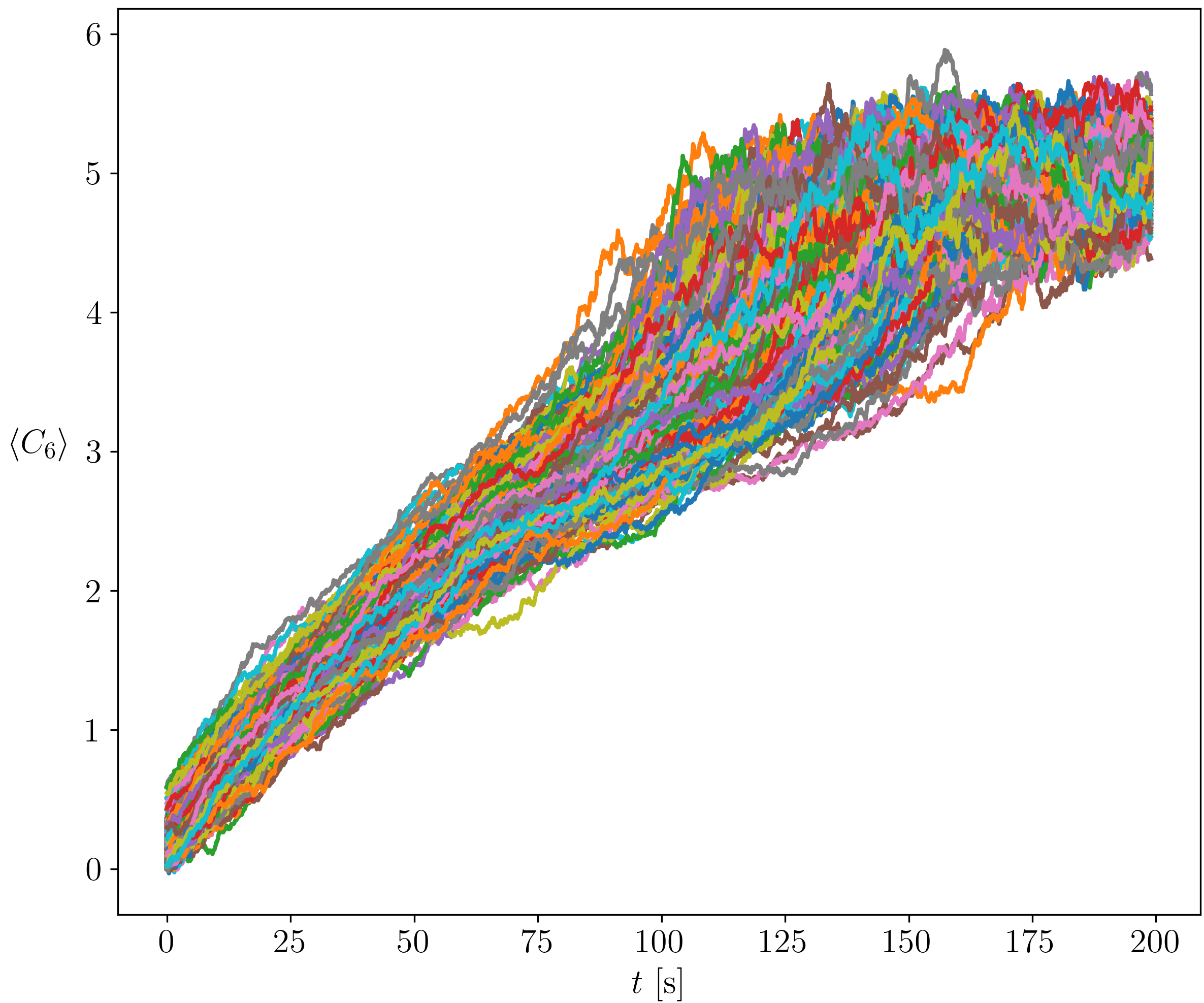}
       \caption{{\small{Optimally controlled $\langle C_6\rangle$ state trajectories.}}}
       \label{fig:OptimalClosed-loopStateTrajectories}
    \end{subfigure}%
    ~ 
    \begin{subfigure}[t]{0.49\textwidth}
        \centering
        \includegraphics[height=2.7in]{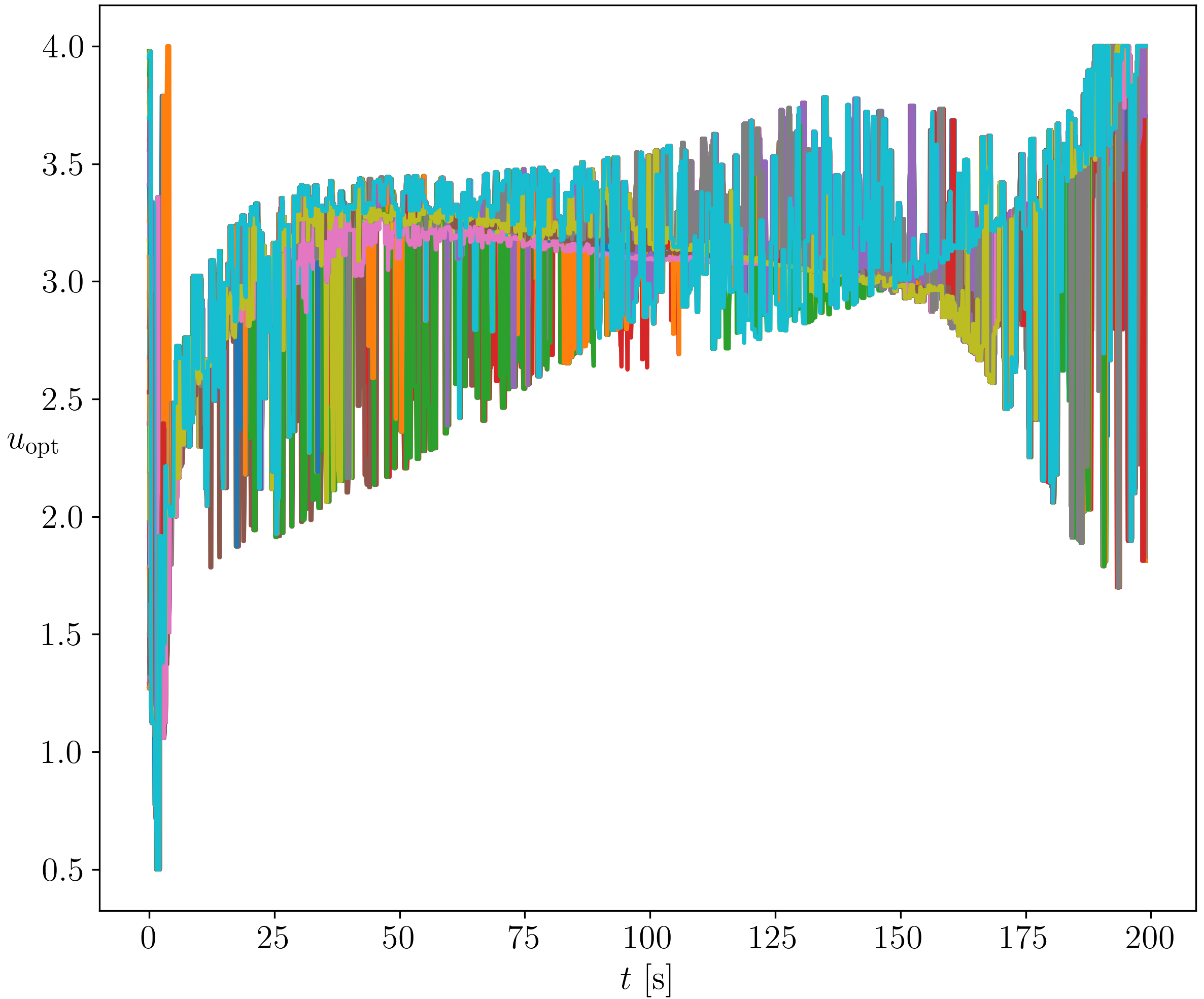}
       \caption{{\small{Optimal control $u_{\rm{opt}}$ trajectories.}}}
       \label{fig:OptimalControlTrajectories}
    \end{subfigure}
    \caption{{\small{The 1000 random sample paths resulting from the closed loop simulation using the learnt optimal policy $\pi_{\rm{opt}}\left(\langle C_6\rangle,t\right)$.}}}
    \label{fig:SamplePath}
\end{figure*}

\section{Conclusions}\label{sec:conclusions}
We propose formulating the finite horizon stochastic control problem for colloidal SA as a two point boundary value problem in the space of PDFs supported over the underlying state space. We develop the idea in detail for a univariate state model from the literature. We show that the resulting problem leads to a ``nonlinear in state and non-affine in control" variant of the Schr\"{o}dinger bridge problem (SBP), and derive the conditions for optimality for the same. We point out how the resulting system of equations differ from the existing SBP literature, and the difficulty in applying the standard computational approach of solving the SBP via fixed point recursions in our setting. These difficulties are fundamentally caused by both the drift and diffusion landscapes being
control non-affine--a situation typical for the colloidal SA application. To circumvent these challenges, we adopt a learning based approach, and train a PINN to jointly learn the optimal control policy, the value function and the optimally controlled state PDFs. Our numerical experiments on a simple benchmark from the literature show that the proposed method performs very well.

The technical proof for the existence-uniqueness of the generalized SBP solutions will appear in our follow up work. Future work will also incorporate data-driven high dimensional colloidal SA models obtained from high-fidelity molecular dynamics simulation data.
\section*{Acknowledgment}
We are indebted to Lu Lu for helpful discussions on the DeepXDE toolbox \cite{lu2021deepxde}.
\balance
\bibliographystyle{IEEEtran}
\bibliography{References.bib}

\end{document}